\newtheorem{theorem}{Theorem}[section] 
 \numberwithin{equation}{section}
\newcommand{\norm}[1]{\left\Vert#1\right\Vert}
\def\cD{{\mathcal D}}
 \title{ {Selfadjoint and $m$ sectorial extensions of  Sturm-Liouville operators} 
}
\author{B M Brown \\
Cardiff School of Computer Science \& Informatics, \\
Cardiff University, 
Cardiff CF24 3AA, 
UK\  and  W D Evans  \\    School of Mathematics, Cardiff University, \\ Senghennydd Road, Cardiff CF24 4AG  UK}
\date{} 
\begin{document}
\maketitle
%
%
%
%
%
%
%
%
%



\begin{abstract}
The self-adjoint and $m$-sectorial extensions of coercive Sturm-Liouville operators are characterised, under minimal smoothness conditions on the coefficients of the differential expression.
\end{abstract}


\section{Introduction}

A non-negative, densely defined symmetric operator $T$ acting in a Hilbert space $H$, has two distinguished self-adjoint extensions, namely the Friedrichs extension $T_F$ and the Krein extension $T_N$; for a strictly positive operator $T$, as will be the case in this paper, $T_N$ coincides with the von Neumann extension of $T$ and is accordingly called the Krein-von Neumann extension. These extensions are extremal in the sense that all non-negative self-adjoint extensions $\tilde{T}$ of $T$ satisfy $T_N \le \tilde{T} \le T_F$ in the form sense; recall that if $T_1,~T_2$ are non-negative self-adjoint operators with associated quadratic forms $t_1[\cdot], t_2[\cdot]$ and form domains $Q(T_1), Q(T_2)$ respectively, then $T_1 \le T_2$ in the form sense if $Q(T_2) \subset Q(T_1)$ and $ 0 \le t_1[u] \le t_2[u]$ for all $ u \in Q(T_2)$. The abstract theory was initiated by Krein in \cite{Kreina,Kreinb} and developed further by  Vishik in \cite{Vishik} and then by Birman in \cite{Birman}; we shall refer to the work of the 3 authors in the cited papers as the Krein-Vishik-Birman theory. In the first, and main, part of this paper we determine all the positive self-adjoint extensions when $T$ is the minimal operator generated by the general Sturm-Liouville expression 
\[
\tau  u := \frac{1}{k} \left\lbrace -(pu')' + q u \right\rbrace 
\]
on the interval $ [a,m), -\infty <a <m \le \infty$, with the coefficients subject to the following minimal conditions:
\begin{enumerate}
\item $k,p >0\  a.e.\  \textrm{on}\   [a,m); k, 1/p \in L^1_{loc}[a,m);$
	\item $q \in L^1_{loc}[a,m) $ real-valued.
\end{enumerate}
We are assuming that $a$ is a regular end-point of the interval, but $m$ is allowed to be singular, i.e., at least one of the following is possible:
\[
m= \infty;\ \int^m k dx = \infty;\ \int^m \frac{1}{p} dx = \infty;\ \int^m q dx = \infty.
\]
The case of $a$ a singular end-point can be treated similarly.

Let $H$ denote the weighted space $L^2(a,m;k)$ with inner-product and norm
\[
(f,g):= \int_a^m f(x) \overline{g(x)}k(x) dx,\ \ \|f\|:=(f,f)^{1/2}.
\]
The \textit{minimal} operator $T$ generated by $\tau$ in $H$ is the closure of $T'$ defined by
\begin{align} 
  T' u = {}&\tau u;\ \mathcal{D}(T')= \{u:  u \in \mathcal{D}(\tau), u(a)=pu'(a) =0,\  u= 0 \ \textrm{in}\ (X,m),\nonumber \\
 {}& \textrm{for some}\ X >a \}
\end{align}
where
\[
\mathcal{D}(\tau) := \{ u:u \in AC_{loc}[a,m)\cap H, pu' \in AC_{loc}[a,m), \tau u \in H \}.
\]
If $m$ is regular, then $T$ has domain
\begin{equation}\label{Eq2}
	 \mathcal{D}(T)\!= \!\{u:  u, pu' \in AC[a,m], \tau u \in H,  u(a)=pu'(a) = u(m) = pu'(m) =0 \}.
\end{equation}
The adjoint $T^*$ of $T$ is the \textit{maximal} operator defined by 
\begin{equation}\label{Eq3}
T^*u=\tau u;\ \mathcal{D}(T^*)=\{u: u \in \mathcal{D}(\tau) \cap H\}.
\end{equation}

In the main part of the paper, dealing with a symmetric $T$ and its self-adjoint extensions, we shall be assuming that $T$ is positive, i.e., $T \ge \mu  >0$, meaning
\[
(Tu,u) \ge \mu \|u\|^2,\ \ u \in \mathcal{D}(T),\ \ \mu >0,
\]
and our objective is to contribute to the classical problem of characterising all the positive self-adjoint extensions of $T$. Our results extend previous ones in that we assume only the minimal requirements on the coefficients of $\tau$ and that $T$ is positive. In order to achieve this degree of generality we use a theorem of Kalf in \cite{Kalf} in which he gives necessary and sufficient conditions for $T$ to be bounded below, and a general characterisation of the Friedrichs extension of Sturm-Liouville operators in terms of weighted Dirichlet integrals. Kalf's result on the semi-boundedness of $T$ is a consequence of a result of Rellich in \cite{Rel} equating the lower semi-boundedness to the disconjugacy of the equation $\tau u = \lambda u$ near $m$. Rellich proves that if 
\begin{equation} \label{Eq4}
\tau u = \lambda u
\end{equation}
is non-oscillatory near $m$ for some $\lambda \in \mathbb{R}$ then $T$ is bounded below, while conversely, if $T$ is bounded below with lower bound $ \alpha ,$ then (\ref{Eq4}) is disconjugate for every $ \lambda < \alpha $, i.e. any solution $ u \neq 0$ of (\ref{Eq4}) with $ \lambda \le \alpha$ has at most one zero .
Before quoting Kalf's result, we recall that if (\ref{Eq4}) is non-oscillatory at $m$, there exist linearly independent solutions $f,g$ with the following properties:
\begin{enumerate}
	\item $ f(x) >0,\ \ g(x) > 0$ in $ [s,m) $ for some $ s \in (a,m)$;
	\item $pf', \ pg' \in AC[a,m)$;
	\item $ \int_s^m\frac{1}{pf^2} dx = \infty;\ \ \int_s^m \frac{1}{pg^2} dx < \infty;$
	\item $\lim_{x \rightarrow m}\frac{f(x)}{g(x)} = 0.$
\end{enumerate}
The functions $f,g$ are called the \textit{principal} and \textit{non-principal} solutions of (\ref{Eq4}), respectively. Kalf's theorem implies the following when $a$ is regular; see Corollary 2 in \cite{Roseberger}:

\bigskip

\noindent{\bf Theorem (Kalf)} The operator $T$ is bounded below if and only if there exist $\mu \in \mathbb{R}$ and a function $h \in AC_{loc}[a,m)$ which is such that $ph' \in AC_{loc}[a,m),~h >0$ in $[s,m)$ for some $s \in (a,m),  \int_s^m \frac{1}{ph^2} dx = \infty$ and 
\[
q \ge \frac{(ph')'}{h} + \mu k\ \ \textrm{in}\ \ [s,m).
\]
The Friedrichs extension $T_F$ of $T$ has domain
\begin{equation} \label{Eq5}
\mathcal{D}(T_F)= \left\lbrace u: u \in \mathcal{D}(\tau), u(a)=0, \int_s^m ph^2 \left|\left( \frac{u}{h}\right)'\right|^2 dx < \infty \right\rbrace . 
\end{equation}
It follows that the sesquilinear form associated with $T_F$ is
\begin{equation}\label{Eq6}
t_F[v,w]= \int_a^m \left\lbrace ph  ^2 \left( \frac{v}{h}\right)' \overline{\left( \frac{w}{h}\right)'} + q_h v \overline{w}\right\rbrace dx,	
\end{equation}
where $ q_h = q - \frac{(ph')'}{h}$.

We shall assume that $ \mu >0$ so that $T$ is positive. Kalf also allows for $a$ to be singular. It is observed in \cite{Kalf} that the  {\it principal solution}  assumption on $h$ can be replaced by a {\it non-principal solution}  assumption, i.e.,$ \int_s^m \frac{1}{ph^2}dx = \infty$ can be replaced by $\int_s^m \frac{1}{ph^2}dx < \infty$, in which case 
\begin{equation} \label{Eq7}
\mathcal{D}(T_F)=\! \left\lbrace u:\! u \in \mathcal{D}(\tau), u(a)=0, \lim_{x\rightarrow m} \frac{|u(x)|}{h(x)} = 0 ,\ \! \int_s^m ph^2 \left|\left( \frac{u}{h}\right)'\right|^2 \! dx < \infty \right\rbrace . 
\end{equation}

 Our principal tool is the result from the Krein-Vishik-Birman theory, that 
 there is a one to one  correspondence between the set of all positive self-adjoint extensions of $T$ and the  set of 
 all pairs $\{N_B,B\}$, where $N_B$ is a subspace of  $\ker T^*$, and  $B$ is a
 positive self-adjoint operators    in $N_B$; the Krein-von Neumann extension corresponds to $B=0$ and the Friedrichs extension to $B=\infty$,(i.e., $B$ acts (trivially) in a $0$-dimensional vector space). The reader is referred to \cite{Grubb1970} for a comprehensive treatment, and in particular to section 2 of \cite{Grubb83} in which an account is given of earlier works of the author; the latter are acknowledged in the addendum \cite{Alonso/Simon2} to the survey article \cite{Alonso/Simon}.     If $\tilde{T}$ is a positive self-adjoint extension of $T$, then $\tilde{T} = T_B$ for some $B$, where $T_B$ is associated with a form $t_B[\cdot]$ which satisfies
\begin{equation}\label{Eq8}
t_B = t_F + b, \ \ Q(T_B) = Q(T_F) \dotplus Q(B)
\end{equation}
and $b[\cdot]$ is the form of $B$; we use the notation $Q(A)$ to denote the form domain of a positive operator $A$, i.e., the domain of the associated form $a[\cdot]$. Thus any $v \in Q(T_B)$ can be written $v=u + \eta$, where $u \in Q(T_F)$ and $\eta \in Q(B)\subseteqq N$. Furthermore
\begin{equation}\label{Eq9}
t_B[v] = t_F[u] + (B\eta, \eta);
\end{equation}
see \cite{Birman}, \cite{Grubb}, \cite{Grubb1970} and \cite{Alonso/Simon}. The Friedrichs and Krein-von Neumann extensions have  important roles in the application of the theory of symmetric operators. The Friedrichs extension has an acknowledged natural part to play in quantum mechanics, while in \cite{Grubb83}, Grubb describes an intimate connection between the eigenvalues of the Krein-von Neumann extension of a minimal elliptic differential operator of even order and those of a higher order problem concerning the buckling of a clamped plate. An abstract version of the latter connection is given in \cite{Gestazyetal}, establishing the Krein-von Neumann extension as a natural object in elasticity theory.

The last section of the paper is concerned with the more general problem of characterising all the coercive $m$-sectorial extensions of a coercive sectorial minimal operator $T$ generated by the expression $\tau$ with a complex coefficient $q$. The same problem is discussed by Arlinskii in \cite{arl_nach}, Theorem 3.1, where previously determined abstract results are applied to a general second-order differential operator on $[0,\infty)$ with $L^{\infty}[0,\infty)$ coefficients. Theorems 2.1 and 2.2 below are analogous results for $\tau$ under the minimal conditions assumed here. In \cite{Grubb}, Grubb developed methods of Birman and Vishik to characterise extensions of adjoint pairs of operators with bounded inverses and applied the results to elliptic partial differential operators.

An important tool in Kalf's paper, as well as in this paper, is the \textit{Jacobi factorisation identity}
\begin{equation}\label{Eq10}
-(pu')' + \frac{(ph')'}{h}u = -\frac{1}{h}\left[ ph^2\left( \frac{u}{h}\right) '\right] '.
\end{equation}
	
\medskip

\section{Self-adjoint extensions}
\subsection{The limit-point case at $m$}
Since $a$ is regular and $T \ge \mu >0$, the deficiency index of $(T-\lambda)$, and hence the dimension of \textrm{ker}($T^* - \lambda$), is constant for all $\lambda \in \mathbb{R} \setminus [\mu,\infty)$. In the limit-point case of $\tau u = \lambda u $ at $m$ it therefore follows, in particular, that $ N:= \textrm{ker}(T^*)$ is of dimension $1$, and so, any $\eta \in N$ can be written $\eta=c\psi$,  where $\psi\in L^2[a,m;k)$ is real, $c \in \mathbb{C} $ and  $\tau \psi=0$. Our main result in the limit-point case is

\medskip

\begin{theorem}
	\label{Thm1} Let $\tau$ be in the limit point case at $m$. Then there is a one-one correspondence between the positive self-adjoint extensions of $T$ and the one-parameter family of operators $\{T_l\}, 0\le l \le \infty$, where $T_l$ is the restriction of $T^*$ to the domain
	\begin{equation}
	\mathcal{D}(T_l):= \left\lbrace v: v\in \mathcal{D}(T^*), (pv')(a)= [ p(a)  \psi'(a) + l \norm{\psi}^2 ] v(a)\right\rbrace .\label{Eq2.1}
	\end{equation}
The operator $T_0$ is the Krein-von Neumann extension and $T_{\infty}$ is the Friedrichs extension.	
\end{theorem}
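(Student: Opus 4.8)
The plan is to reduce the statement to the Krein--Vishik--Birman correspondence recalled in the introduction and then translate the abstract parametrisation by pairs $\{N_B,B\}$ into the concrete boundary condition \eqref{Eq2.1}. Since $\tau$ is limit-point at $m$, $N=\ker T^*$ is one-dimensional, spanned by the real solution $\psi$ of $\tau\psi=0$, so only two types of pair occur: either $N_B=\{0\}$, which is the Friedrichs extension and which I assign the label $l=\infty$, or $N_B=N$ and $B$ acts as a non-negative scalar $l\in[0,\infty)$ by $B(c\psi)=lc\psi$. First I would record that $\psi(a)\neq 0$: in the limit-point case $\mathcal{D}(T_F)$ is the restriction of $T^*$ by $v(a)=0$, so if $\psi(a)=0$ then $\psi\in\mathcal{D}(T_F)$ with $T_F\psi=\tau\psi=0$, contradicting $T_F\geq\mu>0$. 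I then normalise $\psi$ so that $\psi(a)=1$.

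The second ingredient is a boundary evaluation of $(\tau v,\psi)$ for $v\in\mathcal{D}(T^*)$. Applying Green's (Lagrange) identity to $v$ and $\psi$ on $[a,m)$ and using $\tau\psi=0$ gives $(\tau v,\psi)=[W(v,\psi)]_a^m$, where $W(v,\psi)=(p\psi')v-(pv')\psi$ is the Lagrange bracket, real here because $\psi$ is real. In the limit-point case the bracket vanishes at $m$ for all $v\in\mathcal{D}(T^*)$, so with $\psi(a)=1$ one obtains
\[
(\tau v,\psi)=(pv')(a)-p(a)\psi'(a)\,v(a).
\]
This identity is the bridge between the abstract form data and the boundary values at $a$.

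For a finite $l$ I would invoke the first representation theorem for the closed form $t_B=t_F+b$ with $Q(T_B)=Q(T_F)\dotplus N$. Writing $v=u+c\psi$ with $u\in Q(T_F)$ and noting that $u(a)=0$ forces $c=v(a)$, while $\tau u=\tau v\in H$ promotes $u$ to $\mathcal{D}(T_F)$, so $t_F[u,\tilde u]=(\tau v,\tilde u)$ for every $\tilde u\in Q(T_F)$. Since $T_B$ is a restriction of $T^*$, membership $v\in\mathcal{D}(T_B)$ is equivalent to $t_B[v,w]=(\tau v,w)$ for all $w=\tilde u+\tilde c\psi\in Q(T_B)$; after cancellation this collapses, via \eqref{Eq9}, to the single scalar requirement $(\tau v,\psi)=l\,v(a)\,\norm{\psi}^2$. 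Substituting the boundary evaluation of $(\tau v,\psi)$ yields exactly $(pv')(a)=[p(a)\psi'(a)+l\norm{\psi}^2]\,v(a)$, i.e. $\mathcal{D}(T_B)\subseteq\mathcal{D}(T_l)$; as both are self-adjoint restrictions of $T^*$ acting as $\tau$, the inclusion forces equality. The case $l=\infty$ ($N_B=\{0\}$) gives $v=u\in Q(T_F)$, hence $v(a)=0$, the limiting form of \eqref{Eq2.1}; and $l=0$ ($B=0$) gives the von Neumann domain $\mathcal{D}(T)\dotplus N$, whose elements satisfy $(pv')(a)=p(a)\psi'(a)v(a)$. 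The monotone dependence of the bracketed constant on $l$ reflects the form ordering $T_0=T_N\leq T_l\leq T_F=T_\infty$ and confirms that $l\mapsto T_l$ is the claimed bijection.

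The main obstacle I anticipate is the careful bookkeeping of the third step: justifying that the form-domain component $u$ genuinely lies in the operator domain $\mathcal{D}(T_F)$ (so that $t_F[u,\cdot]$ may be replaced by $(\tau u,\cdot)=(\tau v,\cdot)$), and verifying rigorously that the term of $W(v,\psi)$ at the singular endpoint $m$ vanishes in the limit-point case. Everything else is either a direct consequence of the quoted Krein--Vishik--Birman correspondence or a routine integration by parts through the Jacobi factorisation \eqref{Eq10}.
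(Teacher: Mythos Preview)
Your proposal is correct and yields the same boundary condition as the paper, but by a genuinely shorter route. The paper works entirely through Kalf's description of $t_F$ via the Jacobi factorisation \eqref{Eq10}: it expands $t_B[v,\varphi]$ using the auxiliary function $h$, splits the expression into four integrals $I_1,\dots,I_4$, integrates each by parts, and then uses the limit-point vanishing of the Lagrange bracket at $m$ together with a separate argument that $\bigl[ph^2(\theta/h)'(\overline u/h)\bigr](m)=0$ for $u,\theta\in\mathcal D(T_F)$ to isolate the boundary term at $a$. You bypass all of this by testing $T_Bv=\tau v$ directly against $\psi$ via Green's identity, reducing the form condition to the single scalar equation $(\tau v,\psi)=l\,v(a)\,\norm{\psi}^2$, and reading off the boundary condition from the evaluated Wronskian. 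The gain is transparency and brevity, and you never need the function $h$ or the explicit integrand of $t_F$; the cost is that you rely on two facts the paper either derives or cites separately, namely the abstract identity $\mathcal D(T_F)=Q(T_F)\cap\mathcal D(T^*)$ (to promote $u=v-v(a)\psi$ from $Q(T_F)$ to $\mathcal D(T_F)$) and the characterisation of $\mathcal D(T_F)$ in the limit-point case by the Dirichlet condition at $a$ (used both for $\psi(a)\neq0$ and for $l=\infty$). Both are standard, so your argument stands; the paper's longer computation has the advantage that it parallels the treatment of the regular/limit-circle case in Theorem~\ref{Theorem 3.2}, where the $h$-factorisation is genuinely needed to control the behaviour at $m$.
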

\begin{proof}
 If $ \psi(a) = 0$, then $ \psi$ is an eigenfunction at zero of the self-adjoint extension $T_0$ of $T$ determined by the Dirichlet boundary condition $ u(a) = 0$. Thus $N= \ker T_0$ and $ \mathcal{D}(T) \dotplus N \subseteq \mathcal{D}(T_0)$. Since $ \mathcal{D}(T) \dotplus N $ is the domain of $T_N$ (see \cite{Alonso/Simon}, Example 3.2.), it follows that $T_0 = T_N$.  However by  \cite[Theorem 5 ]{YSZ2016}, $T_0$ is the Friedrichs extension of $T$ and therefore has no null space. Consequently $\psi(a)=0$ is not possible.

Thus we may assume hereafter that $\psi(a) \neq 0$ and, without loss of generality, that
\begin{equation}\label{Eq2.2}
\psi(a) =1
\end{equation}
Consequently, from (\ref{Eq8}), any $v=u+\eta \in Q(T_B)$ is uniquely expressible as
$$ v(x)=v(x)-v(a)\psi(x) + v(a)\psi(x)$$
with
$$u(x)=v(x)-v(a)\psi(x) \in Q(T_F)$$
and
$$\eta(x)=v(a)\psi(x)\in Q(B).$$
Also, from (\ref{Eq9})
$$ t_B[v]=\int_a^m\{ph^2 | \left ( \dfrac {u}{h} \right ) '|^2 + q_h |u|^2 \} dx + l|v(a)|^2 \norm{\psi}^2$$
where $q_h=q-\dfrac{( ph')'}h$ and  $b[\eta]=(B\eta,\eta)=l\norm{\eta}^2, 0 \le l \le \infty.$
Moreover, for $\varphi=\theta + \xi= \varphi-\varphi(a)\psi+ \varphi(a) \psi \in Q(T_B)$,  the sesquilinear form identity for $t_B[v, \varphi] $ associated with (\ref{Eq9}) is

\begin{align*} {}& t_B[v,\varphi]= \\
 {}& \int_a^m    \left \{ ph^2 \left ( \dfrac{v-v(a)\psi} h \right )'    \left ( \dfrac{   \overline{ \varphi-\varphi(a)\psi  }} h  \right )'   
+    q_h  ( v -v(a)\psi    )  
\left ( \overline {\varphi-\varphi(a)\psi}  \right )\right \}dx   \\
 {}&+ l v(a)\overline \varphi(a)\norm{\psi}^2,   \\
 \end{align*}
 \begin{align}
= {}&\int_a^m \left \{ ph^2 \left ( \dfrac vh\right )' \left ( \dfrac{ \overline \varphi}h\right )' +  q_h v \overline \varphi \right \} dx\nonumber \\
{}& -\overline \varphi(a)   \int_a^m \left \{ ph^2 \left ( \dfrac vh\right )'\left ( \dfrac{ \overline \psi}h\right )' + q_h v \overline \psi \right \} dx\nonumber \\
{}& -v(a)  \! \int_a^m \left \{ ph^2 \left ( \dfrac \psi h\right )'\left ( \dfrac{ \overline{ \varphi-\varphi(a)\psi}}h\right )' + \!q_h( \psi )( \overline{ \varphi-\varphi(a)\psi}) \right \} dx\nonumber \\
{}&+ l v(a)\overline \varphi (a) \norm{\psi}^2\nonumber \\
=:I_1+I_2+I_3+I_4. \label{Eq2.3}
\end{align}
By (\ref{Eq10}),
\[
\tau \psi = -\frac{1}{h}\left[ ph^2 \left( \frac{\psi}{h}\right) '\right] ' +q_h \psi = 0.
\]
It follows on integration by parts, and since $ \theta (a) = \varphi(a) - \varphi(a) \psi(a)=0$, that

\begin{align*}
I_2={}&-   \left  [ph^2   \left ( \dfrac vh \right )  \left ( \dfrac{ \overline{\xi} }h\right )' \right  ]_a^m,\ \ \xi = \varphi(a) \psi,\\
I_3={}&-v(a)  \left  [ ph^2 \left ( \dfrac \psi h \right ) '  \left (  \dfrac{ \overline{ \varphi-\varphi(a)\psi}}h\right ) \right]_a^m\\
 = {}& -v(a)  \left  [ ph^2 \left ( \dfrac \psi h \right ) '  \left (  \dfrac{ \overline{ \varphi-\varphi(a)\psi}}h\right ) \right](m).
\end{align*}

For $v \in \cD(T_B)\subset \cD(T^*)$ we have by (\ref{Eq10})
\begin{align}
t_B[v,\varphi]={}&(T_B v,\varphi)=\int_a^m \left \{  \left ( -(pv')' + q v\right ) \overline \varphi \right \} dx  \nonumber \\
={}& \int_a^m \left \{ -\dfrac1h [ph^2 \left ( \dfrac v h \right ) ' ]'+q_h v \right \} \overline \varphi dx\nonumber \\
={}& - \left [ph^2 \left ( \dfrac v h \right )' \left ( \dfrac{ \overline \varphi}h\right )\right ]_a^m + I_1. \label{Eq2.4}
\end{align}
Thus 
\begin{align*}
-\left [ph^2 \left ( \dfrac v h \right )' \left ( \dfrac{ \overline \varphi}h\right )\right ]_a^m \\
 = -\overline {\varphi (a)}\left [ph^2 \left ( \dfrac v h \right ) \left ( \dfrac{ \overline \psi }h\right )'\right ]_a^m -{}&v(a)  \left  [ ph^2 \left ( \dfrac \psi h \right ) '  \left (  \dfrac{ \overline{ \varphi-\varphi(a)\psi}}h\right ) \right](m) \\
+{}& l v(a) \overline {\varphi (a)}\norm{\psi}^2
\end{align*}
and
\begin{align}
0 ={}& ph^2  \left [   \left (  \frac{v}{ h} \right ) ' \left ( \dfrac{ \overline \varphi}h \right ) -  \left ( \dfrac v h \right )  \left ( \dfrac{\overline \varphi (a)   \psi }h \right ) ' \right ] (m)   \nonumber \\
{}&- v(a)   \left [ p h^2 \left   (  \dfrac \psi h \right   )  ' \left ( \dfrac{ \varphi -\overline \varphi (a) \psi}h\right  )\right  ] (m) \nonumber \\
{}& - ph^2 \left [   \left ( \dfrac v h \right ) '     \left ( \dfrac{ \overline \varphi}h \right ) - \left ( \dfrac v h \right )  \left ( \dfrac{ \overline \varphi (a)  \psi}h \right   ) ' \right  ] (a) \nonumber \\
 {}&+ l v(a) \overline \varphi (a) \norm{\psi}^2.\label{Eq2.5}
\end{align}
The value at $m$ of the right-hand side of (\ref{Eq2.5}) is
\begin{align*}
&ph^2  \left[ \left\lbrace \left(\frac{v}{h}\right)'\left(\frac{\overline{\varphi}}{h}\right)- \left(\frac{v}{h}\right)\left(\frac{\overline{\varphi}}{h}\right)'\right\rbrace +  \left(\frac{v}{h}\right) \left(\frac{\overline{\theta}}{h}\right)' - \left(\frac{\eta}{h}\right)'\left(\frac{\overline{\theta}}{h}\right)\right] \\
&=ph^2  \left[ \left\lbrace \left(\frac{v}{h}\right)'\left(\frac{\overline{\varphi}}{h}\right)- \left(\frac{v}{h}\right)\left(\frac{\overline{\varphi}}{h}\right)'\right\rbrace \right . \\
&+\left . \left\lbrace \left(\frac{\eta}{h}\right)\left(\frac{\overline{\theta}}{h}\right)'- \left(\frac{\eta}{h}\right)'\left(\frac{\overline{\theta}}{h}\right)\right\rbrace
+ \left(\frac{u}{h}\right)\left(\frac{\overline{\theta}}{h}\right)'\right] \\ 
&	= p [v'\overline \varphi-v \overline \varphi'](m) + p[\eta \overline \theta '-\eta' \overline \theta](m)+ ph^2
\left (\dfrac uh \right )\left (\dfrac{ \overline \theta}h \right )'. \end{align*}
Suppose now that $ \varphi \in \mathcal(T_B)$. Then the functions $v,u,\eta, \varphi,\theta, \xi$  are members of $\cD(T^*)$ and thus
$$ p [v'\overline \varphi-v \overline \varphi'](m) = p[\eta \overline \theta '-\eta' \overline \theta](m)=0 , $$ 
since we have assumed the limit-point condition at $m$; see \cite{EE}, Theorem III.10.13 or \cite{Naimark}, Section 18.3. Hence the value of the right-hand side of (\ref{Eq2.5}) at $m$ is
\[
 \left [ ph^2 \left (  \frac{\overline{\theta}}{h} \right ) ' \left (  \frac { u}{h} \right ) \right ] (m).
\]
Since $ u, \theta \in \mathcal{D}(T_F)$, we have
$$
(T_F\theta,u)=\int_a^m \left \{ ph^2 \left ( \dfrac \theta h \right ) ' \left ( \dfrac { \overline u} h \right ) ' + q_h \theta \overline u \right \} dx =: I_5.
$$
But as $u(a)= \theta(a) = 0$,
$$ (T_F \theta, u)= \!\int_a^m \left \{ -\dfrac1h \left  [ ph^2 \left ( \dfrac \theta h \right ) '\right ]' + q_h \theta  \right \} \overline u dx = \left [ ph^2 \left (  \dfrac \theta h \right ) ' \left (  \dfrac { \overline u}h \right ) \right ] \!(m)+I_5 .
$$
Therefore, it follows that
\begin{equation}\label{Eq2.6}
\left [ ph^2 \left (  \dfrac \theta h \right ) ' \left (  \dfrac { \overline u}h \right ) \right ] (m)=0.  
\end{equation}
Hence, we infer from (\ref{Eq2.5}) that if $v \in \cD(T_B)$, then  $\forall \varphi \in \cD(T_B)$
$$
\overline \varphi(a)\left \{  (pv')(a)-v(a)p(a)   \psi'(a)-l v(a) \norm{\psi}^2\right \}=0.
$$
If $T_B$ is not the Friedrichs extension of $T$ we have from \cite{YSZ2016}  that   there  $\exists \varphi \in \cD(T_B)$ such that  $\varphi(a)\neq 0$.
Hence    we have that any $v \in \mathcal{D}(T_B)$ satisfies the boundary condition
\begin{equation}\label{Eq2.7}
(pv')(a)= [ p(a) \psi'(a) + l \norm{\psi}^2 ] v(a)  .
\end{equation}
The real constants $l$ parameterise the operators $T_B$: $l=0$ corresponds to $B=0$ and thus the Krein-von Neumann extension of $T$, while $l= \infty$ corresponds to $B=\infty$ and hence the Friedrichs extension.
\end{proof}


\bigskip

\subsection {The case of $m$ regular or limit circle, and $\tau u=0$ non-oscillatory at $m$}
 Let $f,g$ be the principal and non-principal solutions respectively of $\tau u =0$.  From Remark 3 in \cite{Kalf},
 \begin{equation}\label{Eq3.1}
 \frac{u}{g}(m) := \lim_{x\rightarrow m} \frac{u(x)}{g(x)} =0,\ \ \forall u \in Q(T_F),
 \end{equation}
and from Corollary 1 in \cite{Roseberger}
\begin{equation}\label{Eq3.2}
	\frac{u}{f}(m) := \lim_{x\rightarrow m} \frac{u(x)}{f(x)}  \ \ \textrm{exists},\ \ \forall u \in Q(T_F).
\end{equation}
We still have that $u(a) =0$ for $u \in Q(T_F)$, and both
\begin{equation}\label{Eq3.3}
t_F[u] = \int_a^m \left\lbrace ph^2\left|\left( \frac{u}{g}\right)'\right|^2 + q_g|u|^2\right\rbrace  dx,
\end{equation}
where $ q_g=q-\dfrac{( pg')'}g >0$, and
\begin{equation}\label{Eq3.4}
	t_F[u] = \int_a^m \left\lbrace ph^2\left|\left( \frac{u}{f}\right)'\right|^2 + q_f|u|^2\right\rbrace  dx,
\end{equation}
where $ q_f=q-\dfrac{( pf')'}f >0$, are valid. 

 We now have $\textrm{dim} N =2$, and $\{f,g\}$ is a fundamental system of solutions of $\tau u=0$. The self-adjoint operators $B$ act in subspaces $N_B$ of $N$ which therefore may be of dimension $1$ or $2$. In the case $\textrm{dim} N_B =2$ of the next theorem,  $\{\psi_1,\psi_2\}$ is a real orthonormal basis of $N_B$.
 
 \medskip
 
 \begin{theorem}\label{Theorem 3.2} The positive self-adjoint extensions of $T$ which correspond to operators $B$ in the Krein-Vishik-Birman theory with $ \textrm{dim}N_B =1$ form a one-parameter family $T_{\beta}$ of restrictions of $T^*$ with domains
\begin{equation}\label{Eq3.4A}
\mathcal{D}(T_{\beta}):= \left\lbrace v \in \mathcal{D}(T^*): \left[ pg^2\left\lbrace \left(\frac{v}{g}\right) \left( \frac{\psi}{g}\right) '- \left(\frac{v}{g}\right)' \left( \frac{\psi}{g}\right) \right\rbrace  \right] _a^m \!=\! \beta v(a)  \|\psi \|^2 \right\rbrace ,
\end{equation} 	
where $\psi$ is a real basis of $N_B$ with $\psi(a) =1$. 	

 The self-adjoint extensions corresponding to operators $B$ with $\textrm{dim} N_B = 2$ form a family $T_{\beta}$, where now $ \beta$ is a matrix $(b_{j,k})_{j,k=1,2}$ of parameters, and the $T_{\beta}$ are restrictions of $T^*$ to domains
\begin{align}\label{Eq3.4B}
\mathcal{D}(T_{\beta}):={}&\left\lbrace v\in \mathcal{D}(T^*):   \left[ pg^2 \left\lbrace  \left(\frac{v}{g}\right) \left( \frac{\psi_k}{g}\right)' -  \left(\frac{v}{g}\right)' \left( \frac{\psi_k}{g}\right)\right\rbrace  \right] _a^m = \right .\nonumber \\
{}&\left . \sum_{j =1}^2b_{k,j}c_j,\ \  k=1,2 \right\rbrace ,
\end{align}
where $c_1$ and $c_2$ are determined by the values of $v$ at $a$ and $m$ by, 
\begin{eqnarray}\label{Eq3.4C}
\frac{v}{g} (a) &=&  \sum_{j=1}^2 c_j \frac{\psi_j}{g}(a) \nonumber \\
\frac{v}{g} (m) &=&  \sum_{j=1}^2 c_j \frac{\psi_j}{g}(m) .
\end{eqnarray}
 \end{theorem}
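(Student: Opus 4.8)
The plan is to repeat the computation proving Theorem~\ref{Thm1} almost verbatim, now with the non-principal solution $g$ in place of $h$ and with the endpoint $m$ (regular or limit-circle) contributing genuine boundary terms rather than vanishing. Fix a positive self-adjoint extension, written $T_B$ for its Krein--Vishik--Birman pair $\{N_B,B\}$, and for $v,\varphi\in\mathcal{D}(T_B)$ use $Q(T_B)=Q(T_F)\dotplus N_B$ to decompose $v=u+\eta$, $\varphi=\theta+\xi$ with $u,\theta\in Q(T_F)$ and $\eta=\sum_j c_j\psi_j$, $\xi=\sum_k d_k\psi_k\in N_B$. The two defining properties of $Q(T_F)$, namely $u(a)=0$ and $\frac{u}{g}(m)=0$ (the latter being (\ref{Eq3.1})), pin down the coefficients: they are precisely the relations (\ref{Eq3.4C}) determining $c_1,c_2$, and in the one-dimensional case they give the single $c=v(a)$ once $\psi(a)=1$.

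I would then evaluate $t_B[v,\varphi]$ in two ways and subtract, as in (\ref{Eq2.3})--(\ref{Eq2.5}). On the form side $t_B[v,\varphi]=t_F[u,\theta]+(B\eta,\xi)$, and writing $t_F[u,\theta]=t_F[v,\varphi]-\sum_k\bar d_k\,t_F[v,\psi_k]-t_F[\eta,\theta]$, an integration by parts through the Jacobi identity (\ref{Eq10}) (with $\tau\psi_k=0$) turns each $t_F[v,\psi_k]$ into the endpoint bracket $[pg^2(v/g)(\psi_k/g)']_a^m$. On the operator side one integration by parts gives $t_B[v,\varphi]=(\tau v,\varphi)=-[pg^2(v/g)'(\bar\varphi/g)]_a^m+t_F[v,\varphi]$. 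Subtracting removes the common $t_F[v,\varphi]$ and leaves an identity purely in endpoint brackets; here every integration by parts is carried out on $[a,X]$, where the $q_g$-terms cancel exactly, and the limits as $X\to m$ exist by the regular/limit-circle hypothesis and (\ref{Eq3.1}).

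Two observations collapse this identity. First, for any solution $\psi_k$ the quantity $pg^2(\psi_k/g)'=p(\psi_k'g-\psi_k g')$ is the constant Wronskian $W[\psi_k,g]$; together with $\frac{\theta}{g}(m)=0$ this makes $t_F[\eta,\theta]=0$ (the analogue of the term $I_3$ in (\ref{Eq2.3})). Second, $\frac{\theta}{g}(m)=0$ applied to $\varphi$ gives $\frac{\bar\varphi}{g}(m)=\sum_k\bar d_k\frac{\psi_k}{g}(m)$, while $\theta(a)=0$ gives $\frac{\bar\varphi}{g}(a)=\sum_k\bar d_k\frac{\psi_k}{g}(a)$; substituting these lets the surviving $a$- and $m$-brackets coalesce, and the identity becomes
\[
\sum_k \bar d_k\Big\{\big[pg^2\{(v/g)(\psi_k/g)'-(v/g)'(\psi_k/g)\}\big]_a^m-\sum_j b_{kj}c_j\Big\}=0 .
\]
In the one-dimensional case $\{\psi_k\}$ is the single $\psi$ normalised by $\psi(a)=1$, so $(B\eta,\xi)=\beta v(a)\overline{\varphi(a)}\|\psi\|^2$ and the sum is $\overline{\varphi(a)}$ times one bracket; invoking \cite{YSZ2016} to produce $\varphi\in\mathcal{D}(T_B)$ with $\varphi(a)\neq 0$ whenever $T_B\neq T_F$ removes this factor and yields (\ref{Eq3.4A}).

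The main obstacle is the two-dimensional case, where reading off the two separate equations $k=1,2$ requires that the vectors $(d_1,d_2)$ realised by $\varphi\in\mathcal{D}(T_B)$ span $\mathbb{C}^2$; otherwise only one linear combination of the braces is known to vanish. I would obtain this from the Birman--Krein--Vishik description of the operator domain (\cite{Grubb1970}, \cite{Alonso/Simon}): since $B$ is everywhere defined on the finite-dimensional $N_B$, every $\eta_0\in N_B$ occurs as the $N_B$-component of some element of $\mathcal{D}(T_B)$, so the $N_B$-component map is onto $N_B$ and the $(d_1,d_2)$ indeed fill $\mathbb{C}^2$. Each brace then vanishes, which with $\{\psi_k\}$ orthonormal is exactly (\ref{Eq3.4B}), the $c_j$ being fixed by (\ref{Eq3.4C}). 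Finally, for the asserted one-to-one correspondence I would note that (\ref{Eq3.4A}), respectively (\ref{Eq3.4B})--(\ref{Eq3.4C}), define a symmetric restriction of $T^*$ on which the Lagrange boundary form vanishes; as it contains the self-adjoint $T_B$ and a self-adjoint operator has no proper symmetric extension, the two coincide, and $\beta\leftrightarrow B$ is the Krein--Vishik--Birman bijection.
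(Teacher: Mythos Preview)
Your proposal is correct and follows essentially the same route as the paper: decompose $v=u+\eta$, $\varphi=\theta+\xi$ via $Q(T_B)=Q(T_F)\dotplus N_B$, evaluate $t_B[v,\varphi]$ both as $t_F[u,\theta]+(B\eta,\xi)$ and as $(\tau v,\varphi)$ using the Jacobi identity (\ref{Eq10}) with $g$ in place of $h$, and exploit $u(a)=\theta(a)=0$ together with $(u/g)(m)=(\theta/g)(m)=0$ from (\ref{Eq3.1}) to reduce everything to endpoint brackets; the vanishing of the analogue of $I_3$ and the arbitrariness of the $d_k$ then give the boundary conditions.

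A few remarks on where you are slightly more explicit than the paper. Your observation that $pg^2(\psi_k/g)'=W[\psi_k,g]$ is constant is a neat way to see the $I_3$-term vanishes, though the paper simply uses $(\theta/g)(a)=(\theta/g)(m)=0$ directly. In the one-dimensional case the paper does not invoke \cite{YSZ2016}; it just notes that $\varphi(a)=d\psi(a)=d$ runs over all of $\mathbb{C}$ as $\xi=d\psi$ varies in $N_B$, which suffices. In the two-dimensional case the paper asserts without comment that ``$d_1$ and $d_2$ are arbitrary''; your appeal to the operator-domain description in \cite{Alonso/Simon}, \cite{Grubb1970} to see that the $N_B$-component map $\mathcal{D}(T_B)\to N_B$ is onto supplies the missing justification. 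Your closing argument---that the boundary condition defines a symmetric restriction of $T^*$ containing $T_B$, hence equal to it---is also absent from the paper, which stops at deriving the boundary relation; it is a welcome addition if one wants the equality of domains rather than just an inclusion.
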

 \begin{proof}
 
 \noindent{\bf The case $\textrm{dim} N_B = 1$}
 
Let the real function ${ \psi}$ be a basis of $N_B$, and $B \eta = \beta~c \psi$ for $\beta \in \mathbb{R}_+$ and $ \eta = c \psi \in N_B,~ c \in \mathbb{C }$. Then, $ \psi = c_1 f + c_2 g$ for some $c_1, c_2 \in \mathbb{R}$, and $\lim_{x \rightarrow m} \frac{\psi(x)}{g(x)}= c_2$. Suppose that $\psi(a) =1$.

Let $v, \varphi \in Q(T_B)$; then $ v=u+\eta,~ \varphi = \theta + \xi $, where $u, \theta \in Q(T_F),~ \eta, \xi  \in Q(B)$ and as
\[
v=v-v(a)\psi + v(a) \psi,\ \ \varphi =\varphi - \varphi(a)\psi + \varphi(a)\psi,
\]
we have that
\[
u = v-v(a) \psi,\ \eta = v(a) \psi,\ \ \ \theta =\varphi - \varphi(a) \psi,\ \xi=   \varphi(a)\psi.
 \]
  We now have from (\ref{Eq8}) that
  \begin{equation}\label{Eq3.6}
t_B[v,\varphi]= \int_a^m \left \{ pg^2 \left ( \dfrac ug\right )' \left ( \dfrac{ \overline \theta}g\right )' + q_g u \overline \theta \right \} dx  + b(\eta,\xi).  
\end{equation}  
The argument following (\ref{Eq2.3}) can be repeated, with $g$ replacing $h$ and using the facts that now $ \left(\frac{u}{g}\right) (m) = \left(\frac{\theta }{g}\right) (m) = 0$, as well as $u(a) = \theta(a) =0$. The term corresponding to $I_3$ is now zero and the result is that
\begin{align}\label{Eq3.7A}
t_B[v,\varphi]={}&\int_a^m\left\lbrace pg^2\left( \frac{v}{g}\right) '\left( \frac{ \overline{\varphi}}{g}\right) ' +q_g v \overline{\varphi}\right\rbrace  dx \nonumber \\
-{}&  \overline{\varphi(a)}\left[ pg^2 \left( \frac{v}{g}\right) \left( \frac{\psi}{g}\right) '\right]_a^m + \beta v(a)\overline{\varphi(a)} \|\psi\|^2.
\end{align}

If $v \in \mathcal{D}(T_B)$, since $T_B \subset T^*$, 
\begin{align}\label{Eq3.7}
t_B[v,\varphi]={}&(T_B v,\varphi)=\int_a^m ( \tau v ) \overline \varphi  dx \nonumber \\
={}& \int_a^m \left \{ -\dfrac1g [pg^2 \left ( \dfrac v g \right ) ' ]'+q_g v \right \} \overline \varphi dx \nonumber\\
={}& - \left [pg^2 \left ( \dfrac v g \right )' \left ( \dfrac{ \overline \varphi}g\right )\right ]_a^m + \int_a^m \left \{ pg^2 \left ( \dfrac vg\right )' \left ( \dfrac{ \overline \varphi}g\right )' + q_g v \overline \varphi \right \} dx  \nonumber\\
={}& - \left [pg^2 \left ( \dfrac v g \right )' \left ( \dfrac{ \overline \xi}g\right )\right ]_a^m + \int_a^m \left \{ pg^2 \left ( \dfrac vg\right )' \left ( \dfrac{ \overline \varphi}g\right )' + q_g v \overline \varphi \right \} dx, 
\end{align}
since $ \frac{\theta}{g} (a) = \frac{\theta}{g}(m) =0$. We conclude from (\ref{Eq3.7A}) and (\ref{Eq3.7}) that
\[
\left[ pg^2\left\lbrace \left(\frac{v}{g}\right) \left( \overline{\varphi}(a)\frac{\psi}{g}\right) '- \left(\frac{v}{g}\right)' \left( \overline{\varphi}(a)\frac{\psi}{g}\right) \right\rbrace  \right] _a^m = \beta v(a) \overline{\varphi}(a) \|\psi \|^2.
\]
Since $ \varphi(a) = d \psi(a) = d$ for arbitrary $ d \in \mathbb{C},$ it follows that
\begin{equation}\label{Eq3.8}
\left[ pg^2\left\lbrace \left(\frac{v}{g}\right) \left( \frac{\psi}{g}\right) '- \left(\frac{v}{g}\right)' \left( \frac{\psi}{g}\right) \right\rbrace  \right] _a^m = \beta v(a)  \|\psi \|^2.
\end{equation}


\noindent{\bf The case $\textrm{dim}(N_B) = 2$}
Let $ \{\psi_1, \psi_2 \} $ be a real orthonormal basis for $N$. Then
\[
B \psi_j  = \sum_{k=1}^2 b_{k,j} \psi_k,\ j=1,2, 
\]
where 
\[
 b_{j,k} = \overline{b_{k,j}}, \ \ \ \  (B \psi_j,\psi_k) = b_{k,j}.
\]
If $ \eta = \sum_{j=1}^2 c_j \psi_j,~ \xi = \sum_{j=1}^2 d_j \psi_j$, then 
\[
(B \eta, \xi) = \sum_{j,k =1}^2 b_{k,j}c_j \overline{d_k},\ \ \ \ (\eta, \xi) = \sum_{j=1}^2 c_j \overline{d_j}.
\]
Furthermore, for some $\mu_{jk} \in \mathbb{R},~j,k = 1,2,$
\begin{equation}\label{Eq3.10}
\psi_j  =  \mu_{j,1} f + \mu_{j,2} g,\ \ j=1,2,\end{equation}
so that 
\begin{equation}\label{Eq3.11}
\frac{\psi_j}{g}(m) = \mu_{j,2},\ \ j=1,2.
\end{equation}

From (\ref{Eq3.6}) and (\ref{Eq3.7}), we now have
\[
\left[ pg^2 \left\lbrace  \left(\frac{v}{g}\right) \left( \frac{\overline{\xi}}{g}\right)' -  \left(\frac{v}{g}\right)' \left( \frac{\overline{\xi}}{g}\right)\right\rbrace  \right] _a^m = (B \eta, \xi) = \sum_{j,k =1}^2b_{k,j}c_j \overline{d_k}
\]
and so
\begin{eqnarray}\label{Eq3.12}
&\sum_{k=1}^2 \overline{d_k}\left[ pg^2 \left\lbrace  \left(\frac{v}{g}\right) \left( \frac{\psi_k}{g}\right)' -  \left(\frac{v}{g}\right)' \left( \frac{\psi_k}{g}\right)\right\rbrace  \right] _a^m   = (B \eta, \xi)  \nonumber \\
& \!=\! \sum_{j,k =1}^2b_{k,j}c_j \overline{d_k}.	
\end{eqnarray}
  Since $d_1$ and $d_2$ are arbitrary, we have
\begin{equation}\label{Eq3.13}  
  \left[ pg^2 \left\lbrace  \left(\frac{v}{g}\right) \left( \frac{\psi_k}{g}\right)' -  \left(\frac{v}{g}\right)' \left( \frac{\psi_k}{g}\right)\right\rbrace  \right] _a^m =  \sum_{j =1}^2b_{k,j}c_j,\ \  k=1,2.
\end{equation}  

In (\ref{Eq3.13}), $c_1$ and $c_2$ are determined by the values of $v$ at $a$ and $m$, for $v(a) = \eta(a)$ and $ \left( \frac{v}{g}\right) (m) = \left( \frac{\eta}{g}\right) (m)$ by (\ref{Eq3.1}). To be specific,
\begin{eqnarray}\label{Eq3.14}
\frac{v}{g} (a) &=&  \sum_{j=1}^2 c_j \frac{\psi_j}{g}(a) \nonumber \\
\frac{v}{g} (m) &=&  \sum_{j=1}^2 c_j \frac{\psi_j}{g}(m) .
\end{eqnarray}

\end{proof}
\bigskip

\noindent{\bf Remark}
If in the case $\textrm{dim}N_B=1$ of the preceding theorem  $\psi = c_1f$, then $\frac{\psi}{g}(m) =0$ and the boundary condition becomes
\[
(p v')(a)  = \left( \beta \|\psi\|^2\right) v(a) , 
\]
with no contribution from $m$ as in the LP case. We could then repeat the above analysis with $g$ replaced by $f$ to get
\begin{equation}\label{Eq3.15}
\left[ pf^2\left\lbrace \left(\frac{v}{f}\right) \left( \frac{\overline{\varphi}}{f}\right)' - \left(\frac{v}{f}\right)' \left( \frac{\overline{\varphi}}{f}\right) \right\rbrace   \right] _a^m = \beta v(a) \overline{\varphi}(a) \|\psi \|^2,
\end{equation}
where $  \frac{\overline{\varphi}}{f}(m) =  \frac{\overline{\theta}}{f}(m) + c_1  $. The equation (\ref{Eq3.15}) has to be satisfied for all $ \theta \in Q(T_F)$.
\bigskip

\section{Coercive sectorial operators}
\subsection{$m$-sectorial extensions}

Let
\[
\tau := \frac{1}{k} \left\lbrace -(pu')' + q u \right\rbrace,\ \ q = q_1 +iq_2, 
\]
and 
\[
\tau^+ := \frac{1}{k} \left\lbrace -(pu')' + \overline{q} u \right\rbrace,\ \ \overline{q} = q_1 -iq_2, 
\]
on the interval $[a,m)$. Each of the expressions $\tau$ and $\tau^+$ has a minimal and maximal operator associated with it, and the notation has to indicate this; the minimal operators $T(\tau),~T(\tau^+)$  will be denoted by $T,~T^+$ respectively. We shall assume that $q_1$ satisfies the minimal conditions 1. and 2. of the introduction and that 
\begin{equation}\label{Eq4.1}
q_{1,h}:=	q_1- \frac{(ph')'}{h}  \ge \nu k,\ \ 	|q_2|  \le (\tan~\alpha)\left\lbrace 	q_1- \frac{(ph')'}{h}\right\rbrace  ,
\end{equation}
for some $ \nu > 0$ and $ \alpha \in (0,\pi/2)$.  The minimal operators $T,  T^+$ are then \emph{coercive} and  \emph{sectorial}, i.e., the numerical range of $T$,
\[
\Theta(T):= \left\lbrace (T u,u): u \in \mathcal{D}(T)\right\rbrace 
\]
lies in the sector
\[
\Theta(\alpha,\nu):= \left\lbrace z=x+iy \in \mathbb{C}: x \ge \nu>0,\ |y| \le \tan  \alpha ( x - \nu)\right\rbrace 
\]
and the same is true for $T^+$ and its numerical range. The maximal operators are the adjoints of the minimal operators, and $T,~T^+$ form an \emph{adjoint pair} in the sense that
\begin{equation}\label{Eq4.2}
T \subset (T^+)^* ;\ \ \ T^+ \subset T^* .
\end{equation}
Furthermore,  $T, T^+$ are  $J$-symmetric with respect to the conjugation $ J: u\mapsto \overline{u}$, i.e.,
\[
 JTJ \subset T^*,\ \  J(T^+)J \subset (T^+)^*.
\]
.

We denote by $T_F$ and $ T_N$ the Friedrichs and Krein-von Neumann extensions of $T$, respectively, and use a similar notation for $T^+$. We recall that the Friedrichs extension $T_F$ of a sectorial operator $T$ is an $m$-sectorial operator associated with the  closure of the sesquilinear form $t[u,v] = (Tu,v)$, and is coercive if $T$ is coercive. The form domain is $Q(T) = Q(T_F) = Q(T_F^R)$, where $T_F^R$ is the \emph{real} part of $T$, i.e. the  positive self-adjoint operator associated with the form $t^R[u,v]:= \Re t[u,v] := \frac{1}{2}\left( t[u,v] + t^*[u,v] \right)$ , where $t^*[u,v]:=  \overline{t[v,u]} $; see \cite{Kato} for details.

In Arlinskii's construction of the $m$-sectorial extensions of a coercive sectorial operator $T$, the Krein-von Neumann extension $T_N$ has an important role. We refer to \cite{Arl} for the definition of $T_N$ and a comprehensive treatment. From Theorem 3.6 in \cite{Arl},  
\begin{equation}\label{Eq4.3}
Q(T_N) = Q(T) \dotplus N,\ \ N = \ker T^*
\end{equation}
and
\begin{equation}\label{Eq4.4}
t_N[u,v] = t[Pu,Pv],\ \ \ \forall u, v \in Q(T_N) ,
\end{equation}
where $P$ is the projection of $Q(T_N)$ onto $Q(T)$ with respect to the decomposition (\ref{Eq4.3}).
Furthermore
\begin{equation}\label{Eq4.5}
\mathcal{D}(T_N) = \mathcal{D}(T) \dotplus N;\ \ T_N(f+v) = Tf,\ f\in \mathcal{D}(T), v \in N.
\end{equation}
The identities (\ref{Eq4.3}) - (\ref{Eq4.5}) also have exact analogues for $T^+$.

From Edmunds/Evans Theorem III.10.7, 
\begin{equation}\label{Eq4.6}
2 \le \textrm{def}~T + \textrm{def}~T^+ \le 4,
\end{equation}
and since $\textrm{def}~T$ and $\textrm{def}~T^+ $ are equal, being the dimensions of the kernels of $ T^*,~(T^+)^*$ respectively, we have that
\begin{equation}\label{Eq4.7}
1 \le 	\dim(\ker{T^*}) = \dim(\ker{(T^+)^*}) \le 2.
\end{equation}

\subsection{The case $\dim(\ker{T^*}) =1$}

\begin{theorem}
	\label{Theorem 4.2} Let $\dim(\ker{T^*}) =1$ and let $ \psi \in \ker~T^*$ be such that $ \psi(a) =1.$ Then the Krein-von Neumann extension of $T$ has domain
	\begin{equation}\label{Eq4.8}
	\mathcal{D}(T_N):=  \left\lbrace v: v-v(a) \psi \in \mathcal{D}(T_F),\ v'(a) - v(a) \psi'(a) = 0\right\rbrace, 
	\end{equation}
and for all $ v \in \mathcal{D}(T_N)$,
	\begin{equation}\label{Eq4.9}
	T_N v = \tau\left( v - v(a) \psi\right) .
	\end{equation}
	Moreover, the form domain of $T_N$ is
	\begin{equation}\label{Eq4.10}
	Q(T_N) = \left\lbrace v: u= v - v(a) \psi \in Q(T) \right\rbrace 
	\end{equation}
	and 
	\begin{equation}\label{Eq4.11}
	t_N[v] = t[u] = \int_a^m \left\lbrace ph^2\left|\left( \frac{u}{h}\right)'\right|^2 + q_h|u|^2\right\rbrace  dx,\ \ u=v-v(a) \psi.
	\end{equation}
\end{theorem}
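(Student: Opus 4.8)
The plan is to translate Arlinskii's abstract description (\ref{Eq4.3})--(\ref{Eq4.5}) of the Krein--von Neumann extension into explicit boundary data at the regular endpoint $a$. Throughout I use three facts: that $\psi$ spans $N=\ker T^*$ and is normalised by $\psi(a)=1$; that every $f\in\mathcal{D}(T)$ satisfies the minimal-operator conditions $f(a)=(pf')(a)=0$ at the regular endpoint $a$; and -- crucially -- that the hypothesis $\dim(\ker T^*)=1$ forces the limit-point situation at $m$, so that no boundary condition is imposed at $m$ and the only data separating extensions live at $a$.

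For the domain I would prove both inclusions. If $v\in\mathcal{D}(T_N)=\mathcal{D}(T)\dotplus N$, write $v=f+c\psi$ with $f\in\mathcal{D}(T)$ and $c\in\mathbb{C}$. Evaluating at $a$ and using $f(a)=0$, $\psi(a)=1$ gives $c=v(a)$, hence $u:=v-v(a)\psi=f\in\mathcal{D}(T)\subseteq\mathcal{D}(T_F)$; taking quasi-derivatives and using $(pf')(a)=0$ yields $(pv')(a)=v(a)(p\psi')(a)$, which is the boundary condition of (\ref{Eq4.8}) (with $v'$ read as the quasi-derivative $pv'$). For the action, (\ref{Eq4.5}) gives $T_N v=Tf=\tau f=\tau(v-v(a)\psi)$, which is (\ref{Eq4.9}). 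Conversely, if $v$ satisfies the two conditions of (\ref{Eq4.8}), put $u=v-v(a)\psi\in\mathcal{D}(T_F)$; then $u(a)=0$ and $(pu')(a)=(pv')(a)-v(a)(p\psi')(a)=0$, so $u\in\mathcal{D}(T_F)\subseteq\mathcal{D}_{\max}(\tau)$ carries both homogeneous conditions at $a$. By the limit-point characterisation of the minimal domain this places $u\in\mathcal{D}(T)$, whence $v=u+v(a)\psi\in\mathcal{D}(T)\dotplus N=\mathcal{D}(T_N)$.

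For the form I would repeat the evaluation at the level of form domains. By (\ref{Eq4.3}), $Q(T_N)=Q(T)\dotplus N$, and since $Q(T)=Q(T_F)=Q(T_F^R)$ consists of functions with $u(a)=0$ (the content of Kalf's characterisation applied to the real part $T_F^R$, whose coercivity is furnished by (\ref{Eq4.1})), the same argument shows $v\in Q(T_N)$ iff $u=v-v(a)\psi\in Q(T)$, which is (\ref{Eq4.10}). The projection $P$ of (\ref{Eq4.4}) onto $Q(T)$ along $N$ is then precisely $Pv=v-v(a)\psi$, so $t_N[v]=t[Pv]=t[u]$; evaluating $t[u]$ via the Jacobi factorisation (\ref{Eq10}) and integration by parts -- exactly as in (\ref{Eq6})/(\ref{Eq3.3}) but with the complex coefficient $q_h=q-(ph')'/h$, the boundary terms vanishing at $a$ because $u(a)=0$ and at $m$ by the Friedrichs finiteness -- gives the integral in (\ref{Eq4.11}).

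The main obstacle is the sufficiency step of the domain argument, namely the identification $\mathcal{D}(T)=\{u\in\mathcal{D}(T_F):(pu')(a)=0\}$. This is exactly where $\dim(\ker T^*)=1$ is indispensable: it guarantees the limit-point case at $m$, so that the Lagrange bracket $[u,w]_m$ vanishes for all admissible $w$ and no condition survives at $m$; consequently a function of $\mathcal{D}(T_F)\subseteq\mathcal{D}_{\max}(\tau)$ satisfying both $u(a)=0$ and $(pu')(a)=0$ automatically lies in the minimal domain. A further point to keep in view is that, $q$ being complex, $\psi$ solves $\tau^+\psi=0$ rather than $\tau\psi=0$, so $\tau\psi\neq0$ in general; the subtraction of $v(a)\psi$ in (\ref{Eq4.9}) is therefore genuinely needed and cannot be dropped.
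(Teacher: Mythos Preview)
Your approach coincides with the paper's: both arguments simply translate Arlinskii's abstract description \eqref{Eq4.3}--\eqref{Eq4.5} into explicit data at the regular endpoint $a$, using the normalisation $\psi(a)=1$ together with the fact that elements of $\mathcal{D}(T)$ (respectively $Q(T_F)$) vanish, and have vanishing quasi-derivative, at $a$. The form-domain and form-identity parts of your proposal match the paper essentially line for line.

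One point worth noting: the paper's own proof is briefer than yours and in fact establishes only the inclusion $\mathcal{D}(T_N)\subseteq\{v:v-v(a)\psi\in\mathcal{D}(T_F),\ (pv')(a)=v(a)(p\psi')(a)\}$. The converse you single out as the ``main obstacle'' is not addressed there. Your argument for it is the right one: since $\dim\ker T^*=\dim\ker(T^+)^*=1$, equations \eqref{Eq4.6}--\eqref{Eq4.7} give $\mathrm{def}\,T+\mathrm{def}\,T^+=2$, so the two independent conditions $u(a)=(pu')(a)=0$ at the regular endpoint already characterise $\mathcal{D}(T)$ inside $\mathcal{D}((T^+)^*)=\mathcal{D}_{\max}(\tau)\supseteq\mathcal{D}(T_F)$, and hence $u=v-v(a)\psi\in\mathcal{D}(T)$, $v\in\mathcal{D}(T)\dotplus N=\mathcal{D}(T_N)$. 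Your closing remark that $\psi$ solves $\tau^+\psi=0$ (not $\tau\psi=0$) is also correct and consistent with the paper's conventions.
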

\begin{proof}
Any $ v \in Q(T_N)$ can be written
\[
v=v- v(a) \psi + v(a) \psi.
\]
Hence, since $u(a)=0$ for all $u \in Q(T_F)$, we have from (\ref{Eq4.3}) the unique representation $v=u+\xi $, where $ u = v-v(a) \psi,~ \xi = v(a) \psi$. It follows from (\ref{Eq4.4}) that
\begin{equation}\label{Eq4.12}
t_N[v,\varphi] = t_F[u,\theta],
\end{equation}
for all $ \varphi \in Q(T_N)$, with $ P\varphi = \theta = \varphi- \varphi(a) \psi$. Thus
\begin{equation}\label{Eq4.13}
t_N[v] = t_F[u]  = \int_a^m \left\lbrace ph^2\left|\left( \frac{u}{h}\right)'\right|^2 + q_h|u|^2\right\rbrace  dx,\ \ u=v-v(a) \psi.
\end{equation}
From (\ref{Eq4.5}), and since $u(a) = (pu')(a) = 0$ for $u \in \mathcal{D}(T)$, we have that $ v \in \mathcal{D}(T_N)$ satisfies the boundary condition
\begin{equation}\label{Eq4.14}
v'(a) - v(a) \psi'(a) = 0,\ \ \forall  v = v-v(a) \psi + v(a) \psi \in \mathcal{D}(T_N).
\end{equation}
Also
\begin{equation}\label{Eq4.15}
T_N v = T (v-v(a)\psi) = \tau (v-v(a)\psi).
\end{equation}
\end{proof}

\bigskip

We shall now follow Arlinskii's analysis in \cite{arl_nach}, Section 3.1,to characterise all coercive $m$-sectorial extension of $T$. Arlinskii considers a more general second-order differential expression to generate his operator $T$, but with $L^{\infty}$ coefficients, and not the minimal conditions we impose on ours.

Let $X_0$ denote $Q(T)$,  with norm
$$\norm{u}_{X_1}=\left\lbrace  \int _a^m   ph^2  | \left (   \dfrac uh\right ) ' | ^2 + q_{1,h} | u|^2 \right\rbrace ^{1/2} =: t^R[u]^{1/2}$$
and 
$$X_1 =\{ u: u \in AC_{loc} (a,m): \norm{u}_{X_1} <\infty\},$$
with the norm $\|\cdot\|_{X_1}$.

Arlynskii expresses $T$ in \textit{divergence form}; this requires $T$ to be put in the form
\[
T= L_2^*QL_1,
\] 
where 
\begin{enumerate}
	\item
$L_1,~L_2$ are closed, densely defined operators with domains in $H:= L^2(a,m;k)$ and ranges in $ \mathcal{H}:= H\oplus H$;
\item $Q$ is a bounded and coercive operator on $H$;
\item $ \mathcal{D}(L_1) \cap  \mathcal{D}(L_2^*QL_2)$ is dense in $ \mathcal{D}(L_1)$.
\end{enumerate}  
In our application, 
\begin{align*}
&\mathcal{D}(L_1) = X_0;\ \ L_1 u = \left[\begin{array}{c}
u \\ hp^{1/2}\left( \frac{u}{h}\right) ' \end{array}\right ]; \\
&\mathcal{D}(L_2) = X_1;\ \ L_2 u = \left[\begin{array}{c}
u \\ hp^{1/2}\left( \frac{u}{h}\right) ' \end{array}\right ]; \\
& Q= \frac{1}{k}\left [   \begin{array}{ cc}   q_h & 0 \\ 0 & 1 \end{array}  \right ]
\end{align*}
where $q_h = q - \frac{(ph')'}{h}$. The adjoint operators are given by
\begin{align*}
& D(L^*_1)= H\oplus X_1, \; D(L^*_2)= H\oplus  X_0 ;\\
& L^*_j \frac{1}{k} \left [  \begin{array}{c}  f_1 \\f_2 \end{array}  \right ] = f_1-\frac{1}{h}\left(h p^{1/2}f_2\right)',\ \  \textrm{for}\ \  j=1,2 ,
\end{align*}
Then
\[
L_1^*Q^*L_2 u = \overline{q_h} u - \frac{1}{h}\left[ph^2 \left( \frac{u}{h}\right) '\right]'u = \tau^+ u = T^* u.
\]
Arlinskii's approach yields the following result; $\psi$ is the solution of $\tau^+~\psi = 0$ with $ \psi(a)=1$.

\begin{theorem}
	\label{Theorem 4.3} 
The formulae
\begin{align*} 
\mathcal{D}(\tilde T) ={}& \{     v\in X_1: {}&\\
v-(\psi-2 y)v(a) \in D(T_F); {}&
 \left[ p v' - p(\psi'-2y')v(a)-(\psi-2y)v(a)\right] (a)   =wv(a)\},\\
{}&  \tilde T v = \tau(v-(\psi-2 y)v(a)),
\end{align*}
establish a one to one correspondence between all coercive $m$-sectorial extensions $ \tilde{T}$ of $T$, excepting $T_F$ and $T_N$, and the set of all pairs $<w,y>$, where $w$ is a complex number with a positive real part, and $y\in X_0 $ satisfies
$$ \max \{ \Re\left[  t^R[\left( 2y-\varphi,\varphi\right) ],\varphi\in X_0\}\right]  < \Re w.$$
The associated closed form is given by 
$$\tilde t[v] = t^R[ v-(\psi-2 y)  v(a), v]+ wv(a) \overline { v(a)}, \ \  v\in X_1.$$

The Friedrichs and Krein-von Neumann extensions are determined by the pairs $<\infty,0>$ and $<0,0>$ respectively.

\end{theorem}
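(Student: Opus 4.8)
The plan is to deduce Theorem~\ref{Theorem 4.3} from Arlinskii's abstract characterisation of the coercive $m$-sectorial extensions of an operator presented in divergence form $T=L_2^*QL_1$, namely Theorem~3.1 of \cite{arl_nach}, and then to render the resulting abstract boundary data in the concrete terms of the expression $\tau$. The first task is to verify that the operators $L_1,L_2,Q$ exhibited before the theorem satisfy Arlinskii's three hypotheses, but now under the minimal coefficient conditions of the introduction rather than the $L^\infty$ hypotheses of \cite{arl_nach}. Closedness and dense definition of $L_1,L_2$ follow from the definitions of $X_0=Q(T)$ and $X_1$ together with the description \eqref{Eq2} of $\mathcal D(T)$; boundedness and coercivity of $Q$ are immediate from the first inequality in \eqref{Eq4.1}, since $q_{1,h}\ge\nu k>0$ makes $\frac1k q_h$ a bounded sectorial multiplier whose real part is bounded below; and the density of $\mathcal D(L_1)\cap\mathcal D(L_2^*QL_2)$ in $\mathcal D(L_1)$ follows from regularity at $a$ and the structure of $\mathcal D(T)$. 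With these in place, Arlinskii's theorem supplies, for each admissible pair $\langle w,y\rangle$, a closed sectorial form whose associated $m$-sectorial operator $\tilde T$ is the desired extension, together with the exclusion of exactly the two extremal cases $T_F$ and $T_N$.

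The second task is to match the abstract parametrisation with the explicit one. Since $\dim(\ker T^*)=1$ we have $N=\mathrm{span}\{\psi\}$ with $\tau^+\psi=0$ and $\psi(a)=1$, and, because every $u\in Q(T_F)$ satisfies $u(a)=0$, the decomposition $X_1=X_0\dotplus N$ from \eqref{Eq4.3} is realised by $v=(v-v(a)\psi)+v(a)\psi$, so the relevant scalar boundary datum is $v(a)$. Writing the shift vector as $\psi-2y$ with $y\in X_0$, the abstract form takes the claimed shape
\[
\tilde t[v]=t^R\!\left[v-(\psi-2y)v(a),\,v\right]+w\,v(a)\overline{v(a)},\qquad v\in X_1,
\]
where the occurrence of the real-part form $t^R$ (rather than $t$) and the factor $2$ reflect Arlinskii's passage through the real part of the Friedrichs extension and the splitting $t=t^R+i\,\Im t$.

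To obtain the operator domain and action I would polarise $\tilde t$, compute $\tilde t[v,\varphi]$ for $\varphi\in X_1$, and integrate by parts using the Jacobi factorisation identity \eqref{Eq10} exactly as in the proof of Theorem~\ref{Thm1}, collecting boundary contributions at $a$ and at $m$. The contributions at $m$ are made to vanish by the argument of Section~2: the function $v-(\psi-2y)v(a)$ and the corresponding combination for $\varphi$ lie in $\mathcal D(T_F)$, whose elements have the controlled behaviour at the singular endpoint recorded in \eqref{Eq3.1}--\eqref{Eq3.2} (or, in the limit-point subcase, annihilate the Wronskian-type terms at $m$ as in \eqref{Eq2.4}). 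Imposing $\tilde t[v,\varphi]=(\tilde T v,\varphi)$ for all admissible $\varphi$ with $\varphi(a)$ arbitrary then isolates the single boundary condition at $a$,
\[
\left[pv'-p(\psi'-2y')v(a)-(\psi-2y)v(a)\right](a)=w\,v(a),
\]
while the requirement $v-(\psi-2y)v(a)\in\mathcal D(T_F)$ yields $\tilde T v=\tau\bigl(v-(\psi-2y)v(a)\bigr)$.

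Finally I would check that admissibility of $\langle w,y\rangle$ coincides with coercivity and sectoriality of $\tilde t$. For $v=u+v(a)\psi$ with $u\in X_0$, the real part of $\tilde t$ is a Hermitian quadratic form in $(u,v(a))$; minimising over $u$ for fixed $v(a)$ is a Schur-complement computation, and since $\Re t^R[2y-\varphi,\varphi]=2\Re t^R[y,\varphi]-\|\varphi\|_{X_1}^2$ is concave in $\varphi$, its maximum is finite and governs that extremal value. Coercivity of the real part of $\tilde t$ is then exactly the stated inequality $\max\{\Re\, t^R[2y-\varphi,\varphi]:\varphi\in X_0\}<\Re w$; sectoriality follows from \eqref{Eq4.1} together with the rank-one nature of the boundary term and $\Re w>0$. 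The pairs $\langle\infty,0\rangle$ and $\langle0,0\rangle$ are the limiting cases returning $T_F$ and $T_N$, consistent with Theorem~\ref{Theorem 4.2}. The main obstacle is the combination of (i)~re-establishing Arlinskii's abstract hypotheses and boundary calculus under the minimal, merely locally integrable, assumptions on $k,p,q$, and (ii)~the singular-endpoint analysis at $m$ ensuring the boundary terms there genuinely vanish; the extraction of the condition at $a$ and the coercivity inequality are then the integration-by-parts and Schur-complement computations indicated above.
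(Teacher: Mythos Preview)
Your proposal is correct and follows essentially the same route as the paper: set up the divergence form $T=L_2^*QL_1$ with the operators $L_1,L_2,Q$ defined in the text, verify $L_1^*Q^*L_2=\tau^+=T^*$, and then invoke Arlinskii's Theorem~3.1 in \cite{arl_nach} to obtain the parametrisation by pairs $\langle w,y\rangle$. In fact the paper gives no proof beyond this setup, simply stating that ``Arlinskii's approach yields the following result''; your outline of the hypothesis verification, the integration-by-parts extraction of the boundary condition at $a$, and the Schur-complement interpretation of the coercivity inequality supplies considerably more detail than the paper itself provides.
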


\subsection{The case $\dim( \ker T^*) =2$.}
 Let $\{\psi_1,\psi_2\}$ be a basis for $\ker~T^*$ and let $ \psi = \sum_{j=1}^2 c_j \psi_j$ be such that $\psi(a) = 1$. 
 This determines a vector in $\ker ~T^*$,
the complex number $w$ in the open right half plane determines a one-
dimensional coercive operator $W(\lambda\psi)=\lambda w \psi$ . In general, parameter $w$
can be  a  $2\times2$  sectorial/coercive matrix (as in  Theorem  \ref{Theorem 3.2}) which determines
a linear operator and in this case $y$ is a linear operator.

\bibliography{lms_extension}{}
\bibliographystyle{plain}



\end{document}